\theoremstyle{plain}
\newtheorem{theorem}{Theorem}
\newtheorem{proposition}{Proposition}
\newtheorem{corollary}{Corollary}
\theoremstyle{definition}
\newtheorem{definition}{Definition}
\theoremstyle{remark}
\newtheorem{remark}{Remark}
\newtheorem{example}{Example}
\DeclareMathOperator{\sd}{sd}
\DeclareMathOperator{\ord}{ord}
\begin{document}

\title{Shape dimension of maps}

\author{Pavel S. Gevorgyan}
\address{Moscow Pedagogical State University}
\email{pgev@yandex.ru}

\author{I. Pop}
\address{Faculty of Mathematics, "Al. I. Cuza" University,
700505-Iasi, Romania}
\email{ioanpop@uaic.ro}

\begin{abstract}
In this paper we define a numerical shape invariant of a continuous map called shape dimension of a map, which generalizes the shape dimension of a topological space. Some basic properties and applications of this invariant are given. The question of raising the shape dimension by shape finite-dimensional maps is solved. An example of a shape finite-dimensional surjective map between shape infinite-dimensional spaces is given.
\end{abstract}

\keywords{dimension of a morphism of inverse systems, of a pro-morphism, shape dimension of a shape morphism, of a map.}

\subjclass{55P55; 54C56}

\maketitle

\section{Introduction}

This paper continues the study of shape invariant properties of maps  initiated by the authors in \cite{Gev-Pop-2, Gev-Pop-1}. We define a numerical shape invariant of a continuous map $f:X\rightarrow Y$ called shape dimension of a map and denoted by $\sd f$,  which generalizes the shape dimension of a topological space. 

We denote the shape category of topological spaces by $\text{Sh}_{\text{(HTop,\,HPol)}}$, where HTop is the homotopy category of spaces and  HPol is the homotopy category of polyhedra. Recall that the objects of HTop are all topological spaces and the morphisms are the homotopy classes $[f]$ of continuous maps $f:X\to Y$, which are called H-\emph{maps}. By  HPol we mean the full subcategory of HTop whose objects are all spaces having the homotopy type of a polyhedron. The full subcategory of HTop whose objects are all spaces $X$, which are homotopy dominated by $n$-dimensional  polyhedron $P$, is denoted by HPol$_n$.

Recall that for a simplicial complex $K$, $\dim K\leq n$ means that every simplex of $K$ has dimension $\leq n$. Then a covering $\mathcal{U}$ of a space $X$ is of order $\ord \mathcal{U}\leq n$ provided the nerve $N(\mathcal{U})$ has dimension $\leq n$, i.e., whenever $U_0,\ldots,U_k$ are members of $\mathcal{U}$, and $U_0\cap \ldots\cap U_k\neq \emptyset$, then $k\leq n$. By this, a topological space $X$ has covering dimension $\dim X\leq n$ provided every normal covering $\mathcal{U}$ of $X$ admits a normal refinement $\mathcal{V}$ with $\ord \mathcal{V}\leq n$. If $P$ is a polyhedron $P=|K|$, then $\dim X\leq n$ if and only if $\dim K\leq n$. The covering dimension $\dim X$ has a number of interesting properties but obviously it is not a homotopy invariant. This inconvenience has been solved for compact metric spaces by K. Borsuk \cite{Borsuk, Borsuk2} under the name of \textsl{fundamental dimension}. This notion was then studied in depth by S. Nowak \cite{Nowak}. Finally for arbitrary topological spaces a convenient notion of dimension was given by J. Dydak \cite{Dydak} as follows. 

In the shape theory based on inverse systems (see   \cite{Mard-Segal}, Chapter II, \S 1) a system $\mathbf{X}=(X_\lambda,p_{\lambda\lambda'},\Lambda)\in $ pro-HPol is said to be of dimension $\dim \mathbf{X}\leq n $ provided $\mathbf{X}\in $ pro-HPol$_n$, i.e., each $X_\lambda$ is homotopy dominated by a polyhedron $P$ with  $\dim P\leq n$. Then a space $X$ has \textsl{shape dimension} $\sd X\leq n$ provided $X$ admits an HPol$_n$-expansion $\mathbf{p}:X\rightarrow \mathbf{X}$, i.e., an H-Pol-expansion with $\dim \mathbf{X}\leq n$. This property of topological spaces is a shape invariant and thus a homotopy invariant.

In the second section of this paper we define the dimension of a morphism of inverse systems and of a pro-morphism in the category HPol, which generalizes the dimension of an inverse system in HPol (Definition \ref{def.2.1}), and then the shape dimension of a continuous map is defined (Definition \ref{def.2.12}). This notion generalizes the shape dimension of a topological space.
It is proved that a topological space $X$ has the shape dimension $\leq n$ if and only if the identity map $1_X$ has the shape dimension $\leq n$ (Theorem \ref{cor.2.13}).
If the topological space $X$ or $Y$ has the shape dimension $\leq n$  then any shape morphism between these spaces has the shape dimension $\leq n$ as well (Theorem \ref{thm.2.16}). The converse is true for a shape morphism which has a left or right inverse (Theorems \ref{prop_2} and \ref{th_1}). In particular, if $F:X\rightarrow Y$ is a shape isomorphism of topological spaces and has the shape dimension $\sd F=n$ then $X$ and $Y$ are shape $n$-dimensional spaces (Theorem \ref{cor.2.15}). 
When $\sd X=\infty$ then the shape dimension of the identity map $1_X$ is  infinite. 
Example \ref{ex.2.20} shows that there are shape finite-dimensional surjective maps between shape infinite-dimensional spaces. This means that the shape dimension of a map $f:X\to Y$ in general does not depend on the shape dimensions of spaces $X$ and $Y$. Consequently, the notion of the shape dimension of maps is of particular interest in the class of all shape infinite-dimensional spaces.

In the third section it is proved that the notion of the shape dimension of map can defined by means of level morphisms of inverse systems in the category HPol (Theorem \ref{th_3}). A characterization of the shape dimension of maps is given in Theorem \ref{th_2}. Some applications of shape dimension of maps to homology pro-groups and \v{C}ech homology are established (Theorem \ref{prop.3.4} and Corollary \ref{cor.3.5}).

\section{Shape dimension of a map: definitions, properties, examples}

It is known \cite{Mard-Segal} ( Ch.II, \S 1.1, Th. 2, p. 96) that the relation $\sd X\leq n$, for the shape dimension of a space $X$, can be characterized by the following condition:

D3. \textsl{If} $\mathbf{p}:X\rightarrow \mathbf{X}=(X_\lambda,p_{\lambda\lambda'},\Lambda)$ \textsl{is an} HPol-\textsl{expansion, then ever}y $\lambda\in \Lambda$ \textsl{admits} \textsl{a} $\lambda'\geq \lambda$ \textsl{such that} $p_{\lambda\lambda'}$ \textsl{factors in} HPol \textsl{through a } \textsl{polyhedron} $P$ \textsl{with} $\dim P\leq n$.

If $(f_\mu,\phi):\mathbf{X}=(X_\lambda,p_{\lambda\lambda'},\Lambda)\rightarrow \mathbf{Y}=(Y_\mu,q_{\mu\mu'},M)$ is a morphism of inverse systems in HPol, and if $(\lambda,\mu)\in \Lambda\times M$, with $\lambda\geq \phi(\mu)$, denote
\[
f_{\mu\lambda}:=f_\mu\circ p_{\phi(\mu)\lambda}.
\]

\begin{definition}\label{def.2.1}We say that a morphism of inverse systems $(f_\mu,\phi):\mathbf{X}=(X_\lambda,p_{\lambda\lambda'},\Lambda)\rightarrow \mathbf{Y}=(Y_\mu,q_{\mu\mu'},M)$ in the category HPol has dimension $\dim (f_\mu,\phi)\leq n$ if every $\mu\in M$ admits a $\lambda\geq \phi(\mu)$ such that the H-map $f_{\mu\lambda}:X_\lambda\rightarrow Y_\mu$ factors in HPol through a polyhedron $P$ with $\dim P\leq n$, i.e., there are H-maps $u:X_\lambda\rightarrow P$, $v:P\rightarrow Y_\mu$ such that $f_{\mu\lambda}=v\circ u$
\[
 \xymatrix{
X_\lambda \ar[rr]^{f_{\mu\lambda}}\ar[dr]_{u} & & Y_\mu\\
 & P \ar[ur]_{v} &
 }
\]
\end{definition}

\begin{remark}\label{rem.2.2}
It is obvious if $f_{\mu\lambda}$ factors through $P$, and $\lambda'\geq \lambda$, then $f_{\mu\lambda'}$ also factors through $P$.
Indeed, if $f_{\mu\lambda}=v\circ u$, then $f_{\mu\lambda'}=f_{\mu\lambda}\circ p_{\lambda\lambda'}=v\circ u'$, with $u'=u\circ p_{\lambda\lambda}:X_{\lambda'}\rightarrow P$.
\end{remark}

\begin{proposition}\label{prop_1}
Let $(f_\mu,\phi):\mathbf{X}=(X_\lambda,p_{\lambda,\lambda'},\Lambda)\rightarrow \mathbf{Y}=(Y_\mu,q_{\mu\mu'},M)$ be a morphsim of inverse systems in the category HPol. If $\dim \mathbf{X}\leq n$ or $\dim \mathbf{Y}\leq n$ then $\dim (f_\mu,\phi)\leq n$.
\end{proposition}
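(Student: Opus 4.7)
The plan is to handle the two hypotheses separately, in each case taking $\lambda=\phi(\mu)$ (so that $p_{\phi(\mu)\lambda}$ is the identity in HPol and $f_{\mu\lambda}=f_\mu$) and then using a homotopy domination of either $X_{\phi(\mu)}$ or $Y_\mu$ by a polyhedron $P$ with $\dim P\le n$ to produce the required factorization.

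More concretely, fix $\mu\in M$ and set $\lambda:=\phi(\mu)$, so that $f_{\mu\lambda}=f_\mu\colon X_{\phi(\mu)}\to Y_\mu$. In the first case, $\dim\mathbf{X}\le n$ means $X_{\phi(\mu)}$ is homotopy dominated by some polyhedron $P$ with $\dim P\le n$; that is, there exist H-maps $i\colon X_{\phi(\mu)}\to P$ and $r\colon P\to X_{\phi(\mu)}$ with $r\circ i=1_{X_{\phi(\mu)}}$ in HPol. Setting $u:=i$ and $v:=f_\mu\circ r$ yields $v\circ u=f_\mu\circ r\circ i=f_\mu=f_{\mu\lambda}$, giving the required factorization through $P$.

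The second case is completely symmetric: if $\dim\mathbf{Y}\le n$, pick a polyhedron $P$ with $\dim P\le n$ dominating $Y_\mu$ via H-maps $i\colon Y_\mu\to P$ and $r\colon P\to Y_\mu$ with $r\circ i=1_{Y_\mu}$. Then $u:=i\circ f_\mu$ and $v:=r$ satisfy $v\circ u=r\circ i\circ f_\mu=f_\mu=f_{\mu\lambda}$, again producing the factorization.

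There is no real obstacle here; the only point requiring a moment's care is to observe that in HPol a homotopy domination furnishes a genuine equality $r\circ i=1$ of H-maps (not merely a homotopy), so the composed H-maps really agree with $f_{\mu\lambda}$ on the nose in HPol. Since $\mu\in M$ was arbitrary, Definition \ref{def.2.1} gives $\dim(f_\mu,\phi)\le n$.
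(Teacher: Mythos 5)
Your proof is correct, but it takes a more direct route than the paper's. The paper does not use the domination of the individual terms $X_\lambda$ (resp.\ $Y_\mu$) directly; instead it works with factorizations of the \emph{bonding} morphisms through polyhedra of dimension $\leq n$: in the first case it writes $p_{\phi(\mu)\lambda}=v\circ u$ for some $\lambda\geq\phi(\mu)$ and gets $f_{\mu\lambda}=(f_\mu\circ v)\circ u$, and in the second case it factors $q_{\mu\mu'}=v'\circ u'$ for some $\mu'\geq\mu$ and then must invoke the coherence condition of the morphism of inverse systems to find $\lambda\geq\phi(\mu),\phi(\mu')$ with $f_{\mu\lambda}=q_{\mu\mu'}\circ f_{\mu'\lambda}=v'\circ(u'\circ f_{\mu'\lambda})$. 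You instead take $\lambda=\phi(\mu)$ throughout and use the definition of $\dim\mathbf{X}\leq n$ (each term homotopy dominated by an $n$-dimensional polyhedron, so the identity factors as $r\circ i$ in HPol), which collapses both cases to a one-line composition with $f_\mu$ and avoids all index bookkeeping; your observation that $p_{\phi(\mu)\phi(\mu)}=1$ and that domination gives an actual equality $r\circ i=1$ in HPol is exactly the point that makes this legitimate. What the paper's longer route buys is that it only needs the weaker, D3-style hypothesis that bonding maps (rather than identities) factor through $n$-dimensional polyhedra, which is the form in which shape dimension is typically verified for a space via an expansion; under the definition of $\dim\mathbf{X}\leq n$ actually stated in the paper, your argument is complete and simpler.
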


\begin{proof}
If $\dim \mathbf{X}\leq n$ and $\mu\in M$, then for $\phi(\mu)\in \Lambda$ there exist $\lambda\in \Lambda, \ \lambda\geq \phi(\mu)$, a polyhedron $P$ with $\dim P\leq n$ and two morphisms $u:X_\lambda\rightarrow P$, $v:P\rightarrow X_{\phi(\mu)}$ in HPol such that $p_{\phi(\mu)\lambda}=v\circ u$. Then $f_{\mu\lambda}=f_\mu\circ p_{\phi(\mu)\lambda}=(f_\mu\circ v)\circ u$, with $f_\mu\circ v:P\rightarrow Y_\mu$.

If $\dim \mathbf{Y}\leq n$ and $\mu\in M$, then there exist $\mu'\in M,\ \mu'\geq \mu$, a polyhedron $P'$ with $\dim P'\leq n$, and two morphisms $u':Y_{\mu'}\rightarrow P'$, $v':P'\rightarrow Y_\mu$ in HPol such that $q_{\mu\mu'}=v'\circ u'$. Consider $\lambda\in \Lambda $ satisfying
$\lambda \geq \phi(\mu),\phi(\mu')$ and $f_{\mu\lambda}=q_{\mu\mu'}\circ f_{\mu'\lambda}$. Then $f_{\mu\lambda}=v'\circ (u'\circ f_{\mu'\lambda})$,
with $u'\circ f_{\mu'\lambda}:X_\lambda\rightarrow P'$. Therefore in both cases Definition \ref{def.2.1} is satisfied.
\end{proof}

\begin{example}\label{ex.2.5}
It follows from Proposition \ref{prop_1} that if $\mathbf{X}=(X_\lambda,p_{\lambda\lambda'},\Lambda)$ is an inverse system in HPol with $\dim \mathbf{X}\leq n$ then $\dim 1_\mathbf{X}\leq n$. 
\end{example}

\begin{example}\label{ex.2.4}
Let $(f_\mu,\phi):(X_\lambda,p_{\lambda\lambda'},\Lambda)\rightarrow (Y_\mu,q_{\mu\mu'},M)$ be a a morphism of inverse systems in the category Pol of polyhedra. For $\mu\in M$ denote by $Y^n_\mu$ the carrier of the n-skeleton of a triangulation of the polyhedron $Y_\mu$. Suppose that for every $\mu\in M$ we have $f_\mu(X_{\phi(\mu)})\subseteq Y^n_\mu$. Then the morphism in the category HPol, $([f_\mu],\phi):\mathbf{X}=(X_\lambda,[p_{\lambda\lambda'}],\Lambda)\rightarrow \mathbf{Y}=(Y_\mu,[q_{\mu\mu'}],M)$ has $\dim ([f_\mu],\phi)\leq n$.
\end{example}

\begin{proposition}\label{prop.2.6}
If $(f_\mu,\phi),(f'_\mu,\phi'):\mathbf{X}=(X_\lambda,p_{\lambda\lambda'},\Lambda)\rightarrow \mathbf{Y}=(Y_\mu, q_{\mu\mu'}, M)$ are two equivalent morphisms in the category HPol, $(f_\mu,\phi)\sim (f'_\mu,\phi')$, and if $\dim (f_\mu,\phi)\leq n$, then $\dim (f'_\mu,\phi')\leq n$. 
\end{proposition}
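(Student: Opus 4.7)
The plan is to unwind the definition of equivalence of morphisms of inverse systems, combine it with Remark \ref{rem.2.2}, and show that after passing far enough up in the index set $\Lambda$ the two composites $f_{\mu\lambda}$ and $f'_{\mu\lambda}$ actually coincide.

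Fix $\mu\in M$. Since $(f_\mu,\phi)\sim (f'_\mu,\phi')$, by definition of the equivalence relation between morphisms of inverse systems there exists some $\lambda^{\ast}\in\Lambda$ with $\lambda^{\ast}\geq \phi(\mu),\phi'(\mu)$ such that
\[
f_{\mu}\circ p_{\phi(\mu)\lambda^{\ast}}=f'_{\mu}\circ p_{\phi'(\mu)\lambda^{\ast}}\quad\text{in HPol}.
\]
On the other hand, the hypothesis $\dim (f_\mu,\phi)\leq n$ yields some $\lambda_1\geq \phi(\mu)$ and a polyhedron $P$ with $\dim P\leq n$ through which $f_{\mu\lambda_1}=f_\mu\circ p_{\phi(\mu)\lambda_1}$ factors.

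Now choose any $\lambda\in\Lambda$ with $\lambda\geq \lambda_1$ and $\lambda\geq \lambda^{\ast}$; in particular $\lambda\geq \phi'(\mu)$, so $f'_{\mu\lambda}$ is defined. Writing $p_{\phi'(\mu)\lambda}=p_{\phi'(\mu)\lambda^{\ast}}\circ p_{\lambda^{\ast}\lambda}$ and applying the identity above, one gets
\[
f'_{\mu\lambda}=f'_{\mu}\circ p_{\phi'(\mu)\lambda^{\ast}}\circ p_{\lambda^{\ast}\lambda}
=f_{\mu}\circ p_{\phi(\mu)\lambda^{\ast}}\circ p_{\lambda^{\ast}\lambda}
=f_{\mu}\circ p_{\phi(\mu)\lambda}=f_{\mu\lambda}.
\]
Since $\lambda\geq \lambda_1$ and $f_{\mu\lambda_1}$ factors through $P$, Remark \ref{rem.2.2} gives that $f_{\mu\lambda}$ also factors through $P$, hence so does $f'_{\mu\lambda}$. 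As $\mu$ was arbitrary, Definition \ref{def.2.1} is satisfied for $(f'_\mu,\phi')$, i.e. $\dim (f'_\mu,\phi')\leq n$.

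There is no real obstacle here: the argument is essentially cofinality bookkeeping. The only point to watch is that the index $\lambda$ must simultaneously witness the factorization for $(f_\mu,\phi)$ and the equality forced by the equivalence relation, which is why we take $\lambda$ above both $\lambda_1$ and $\lambda^{\ast}$.
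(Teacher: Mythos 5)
Your argument is correct and is essentially the paper's own proof: both use the equivalence to find an index where $f_{\mu\lambda}=f'_{\mu\lambda}$, invoke Remark \ref{rem.2.2} to push the factorization of $f_{\mu\lambda_1}$ up to that common index, and conclude that $f'_{\mu\lambda}$ factors through the same $P$. You merely make explicit the cofinality bookkeeping that the paper compresses into ``we can suppose.''
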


\begin{proof}
For $\mu\in M$, there exists $\lambda\in \Lambda$, $\lambda\geq \phi(\mu),\phi'(\mu)$ such that $f_{\mu\lambda}=f'_{\mu\lambda}$. Now by Remark \ref{rem.2.2}, we can suppose that $f_{\mu\lambda}=v\circ u$, for some H-maps $u:X_\lambda\rightarrow P$, $v:P\rightarrow Y_\mu$, with $\dim P\leq n$. Then $f'_{\mu\lambda}=v\circ u$, so $\dim (f'_\mu,\phi')\leq n$.
\end{proof}

Based on Proposition \ref{prop.2.6}, we can give the following definition.

\begin{definition}\label{def.2.7}
A pro-morphism $\mathbf{f}:\mathbf{X}=(X_\lambda,p_{\lambda\lambda'},\Lambda)\rightarrow \mathbf{Y}=(Y_\mu,q_{\mu\mu'},M)$, in
pro-HPol, has the dimension $\dim \mathbf{f}\leq n$ provided $\mathbf{f}$ admits a represenation by a morphism $(f_\mu,\phi):\mathbf{X}\rightarrow \mathbf{Y}$ with $\dim (f_\mu,\phi)\leq n$.
\end{definition}

\begin{proposition}\label{prop.2.8}Let $\mathbf{f}:\mathbf{X}\rightarrow \mathbf{Y}$ and $\mathbf{g}:\mathbf{Y}\rightarrow \mathbf{Z}$ be pro-morphisms in HPol.
If $\dim \mathbf{f}\leq n$, then $\dim (\mathbf{g}\circ \mathbf{f})\leq n$.
\end{proposition}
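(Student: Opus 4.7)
The plan is to work with explicit representatives and use the standard formula for composition of morphisms of inverse systems. Pick a representative $(f_\mu,\phi):\mathbf{X}\to\mathbf{Y}$ of $\mathbf{f}$ witnessing $\dim(f_\mu,\phi)\leq n$, and any representative $(g_\nu,\psi):\mathbf{Y}\to\mathbf{Z}$ of $\mathbf{g}$, where $\mathbf{Z}=(Z_\nu,r_{\nu\nu'},N)$. By the standard definition, the composition $\mathbf{g}\circ\mathbf{f}$ is represented by the morphism of inverse systems
\[
(g_\nu\circ f_{\psi(\nu)},\,\phi\circ\psi):\mathbf{X}\longrightarrow\mathbf{Z}.
\]
By Proposition \ref{prop.2.6} and Definition \ref{def.2.7}, it suffices to show that this representative has dimension $\leq n$.

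Next, I would verify the dimension condition directly from Definition \ref{def.2.1}. Fix $\nu\in N$ and set $\mu:=\psi(\nu)\in M$. Since $\dim(f_\mu,\phi)\leq n$, there exist $\lambda\geq\phi(\mu)=(\phi\circ\psi)(\nu)$, a polyhedron $P$ with $\dim P\leq n$, and H-maps $u:X_\lambda\to P$, $v:P\to Y_\mu$ with $f_{\mu\lambda}=v\circ u$. Then the relevant composite for $(g_\nu\circ f_{\psi(\nu)},\phi\circ\psi)$ from $X_\lambda$ to $Z_\nu$ is
\[
(g_\nu\circ f_{\psi(\nu)})\circ p_{\phi(\psi(\nu))\lambda}=g_\nu\circ(f_\mu\circ p_{\phi(\mu)\lambda})=g_\nu\circ f_{\mu\lambda}=(g_\nu\circ v)\circ u,
\]
which is a factorization through $P$ via the H-maps $u:X_\lambda\to P$ and $g_\nu\circ v:P\to Z_\nu$. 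Since $\dim P\leq n$, Definition \ref{def.2.1} is met for the composite morphism, so $\dim(g_\nu\circ f_{\psi(\nu)},\phi\circ\psi)\leq n$, and hence $\dim(\mathbf{g}\circ\mathbf{f})\leq n$.

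There is no substantial obstacle here: the argument is a straightforward unwinding of the composition law in pro-HPol together with the trivial observation (essentially in the spirit of Remark \ref{rem.2.2}) that postcomposing a factorization with any H-map preserves the factoring polyhedron. The only care needed is to pin down the right representative of $\mathbf{g}\circ\mathbf{f}$ and to invoke Proposition \ref{prop.2.6} so that the conclusion transfers to the pro-morphism.
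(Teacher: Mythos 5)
Your proposal is correct and follows essentially the same route as the paper: represent $\mathbf{g}\circ\mathbf{f}$ by $(g_\nu\circ f_{\psi(\nu)},\phi\circ\psi)$, use the factorization $f_{\psi(\nu)\lambda}=v\circ u$ through $P$ supplied by $\dim\mathbf{f}\leq n$, and absorb $g_\nu$ into $v$ to factor $h_{\nu\lambda}=(g_\nu\circ v)\circ u$. The appeal to Proposition \ref{prop.2.6} is harmless but not needed, since Definition \ref{def.2.7} only asks for one representative of the composite with dimension $\leq n$, which you exhibit directly.
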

\begin{proof}Suppose that $\mathbf{f}$ and $\mathbf{g}$ are represented  by the morphisms
$(f_\mu,\phi): \mathbf{X}=(X_\lambda,p_{\lambda\lambda'},\Lambda)\rightarrow \mathbf{Y}=(Y_\mu,q_{\mu\mu'},M)$ and respectively
$(g_\nu,\psi):\mathbf{Y}\rightarrow \mathbf{Z}=(Z_\nu,r_{\nu\nu'},N)$. Then $\mathbf{h}:=\mathbf{g}\circ \mathbf{f}:\mathbf{X}\rightarrow \mathbf{Z}$ is represented by the morphism $(h_\nu,\chi):\mathbf{X}\rightarrow \mathbf{Z}$, with $\chi=\phi\circ \psi$ and $h_\nu=g_\nu\circ f_{\psi(\nu)}$.

If $\nu\in N$, then for $\psi(\nu)\in M$ there exist $\lambda\in \Lambda, \lambda\geq \phi(\psi(\nu))=\chi(\nu)$ and some H-maps $u:X_\lambda\rightarrow P$, $v:P\rightarrow Y_{\psi(\nu)}$, with $\dim P\leq n$, such that
\begin{equation}\label{eq-2.1}
f_{\psi(\nu)\lambda}=v\circ u.
\end{equation}
But $h_{\nu\lambda}=h_\nu\circ p_{\chi(\nu)\lambda}=g_\nu\circ f_{\psi(\nu)}\circ p_{\psi(\phi(\nu))\lambda}=g_\nu\circ f_{\psi(\nu)\lambda}$. So by the relation \eqref{eq-2.1} we have $h_{\nu\lambda}=(g_\nu\circ v)\circ u$, with $g_\nu\circ v:P\rightarrow Z_\nu$. And this shows that $\dim \mathbf{h}\leq n$.
\end{proof}

\begin{proposition}\label{prop.2.9} Let $\mathbf{f}:\mathbf{X}\rightarrow \mathbf{Y}$ and $\mathbf{g}:\mathbf{Y}\rightarrow \mathbf{Z}$ be pro-morphisms in the
category HPol. If $\dim \mathbf{g}\leq n$, then $\dim (\mathbf{g}\circ \mathbf{f})\leq n$.
\end{proposition}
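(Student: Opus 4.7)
The plan is to parallel the proof of Proposition \ref{prop.2.8}, but using the factorization on the $\mathbf{g}$ side and transporting it backwards through $\mathbf{f}$ via the morphism-of-systems compatibility condition. Concretely, I would represent $\mathbf{f}$ by an arbitrary morphism $(f_\mu,\phi):\mathbf{X}\to\mathbf{Y}$ and $\mathbf{g}$ by a morphism $(g_\nu,\psi):\mathbf{Y}\to\mathbf{Z}$ with $\dim(g_\nu,\psi)\leq n$ (such a representative exists by Definition \ref{def.2.7}). The composite $\mathbf{h}:=\mathbf{g}\circ\mathbf{f}$ is then represented by $(h_\nu,\chi)$ with $\chi=\phi\circ\psi$ and $h_\nu=g_\nu\circ f_{\psi(\nu)}$, exactly as in the proof of Proposition \ref{prop.2.8}.

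Fix $\nu\in N$. Since $\dim(g_\nu,\psi)\leq n$, there exist $\mu\in M$ with $\mu\geq\psi(\nu)$, a polyhedron $P$ with $\dim P\leq n$, and H-maps $u:Y_\mu\to P$, $v:P\to Z_\nu$ such that
\[
g_{\nu\mu}=g_\nu\circ q_{\psi(\nu)\mu}=v\circ u.
\]
Now apply the compatibility condition for the morphism $(f_\mu,\phi)$ to the pair $\psi(\nu)\leq\mu$: there exists $\lambda\in\Lambda$ with $\lambda\geq\phi(\psi(\nu))=\chi(\nu)$ and $\lambda\geq\phi(\mu)$ such that $f_{\psi(\nu)\lambda}=q_{\psi(\nu)\mu}\circ f_{\mu\lambda}$.

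The verification then reduces to a short computation:
\[
h_{\nu\lambda}=h_\nu\circ p_{\chi(\nu)\lambda}=g_\nu\circ f_{\psi(\nu)\lambda}=g_\nu\circ q_{\psi(\nu)\mu}\circ f_{\mu\lambda}=v\circ(u\circ f_{\mu\lambda}),
\]
exhibiting $h_{\nu\lambda}$ as a factorization through the same polyhedron $P$, via the H-map $u\circ f_{\mu\lambda}:X_\lambda\to P$ and $v:P\to Z_\nu$. This yields $\dim(h_\nu,\chi)\leq n$, hence $\dim\mathbf{h}\leq n$.

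The only delicate point, and the one place where this proof differs nontrivially from Proposition \ref{prop.2.8}, is the use of the compatibility condition of $(f_\mu,\phi)$ to equate $f_{\psi(\nu)\lambda}$ with $q_{\psi(\nu)\mu}\circ f_{\mu\lambda}$; this is needed to insert the factorization of $g_{\nu\mu}$ into the expression for $h_{\nu\lambda}$. Once that index $\lambda$ is chosen large enough to make this identity hold (and to dominate $\chi(\nu)$), the rest of the argument is purely formal, and unlike Proposition \ref{prop.2.8} no assumption on $\mathbf{f}$ is required.
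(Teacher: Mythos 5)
Your proof is correct and follows essentially the same route as the paper's: factor $g_{\nu\mu}$ through a polyhedron of dimension $\leq n$, choose $\lambda$ large enough that the compatibility identity $f_{\psi(\nu)\lambda}=q_{\psi(\nu)\mu}\circ f_{\mu\lambda}$ holds, and then rewrite $h_{\nu\lambda}=g_{\nu\mu}\circ f_{\mu\lambda}=v\circ(u\circ f_{\mu\lambda})$. The paper's argument is identical up to notation, including the key use of the morphism-of-systems condition that you single out.
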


\begin{proof}
We use the same notations as in the above proof. Now for $\nu\in N$, there exist $\mu\geq \psi(\nu)$, a polyhedron $P'$ with $\dim P'\leq n$ and some H-maps
$ u':Y_\mu\rightarrow P'$, $v':P'\rightarrow Z_\nu$, satisfying
\begin{equation}
g_{\nu\mu}=v'\circ u'.
\end{equation}
Now conider $\lambda\in \Lambda$, $\lambda\geq \phi(\mu),\lambda\geq \phi(\psi(\nu))$, such that
$q_{\psi(\nu)\mu}\circ f_\mu\circ p_{\phi(\mu)\lambda}=f_{\psi(\nu)}\circ p_{\phi(\psi(\nu))\lambda}$, that is $q_{\psi(\nu)\mu}\circ f_{\mu\lambda}=f_{\psi(\nu)\lambda}$. By this and by the relation $h_{\nu\lambda}=g_\nu\circ f_{\lambda\psi(\nu)}$, above deduced, we have
$h_{\nu\lambda}=g_\nu\circ q_{\psi(\nu)\mu} \circ f_{\mu\lambda}=g_{\nu\mu}\circ f_{\mu\lambda}=(v'\circ u')\circ f_{\mu\lambda}=
v'\circ (u'\circ f_{\mu\lambda})$, with $u'\circ f_{\mu\lambda}:X_\lambda\rightarrow P$, by which we conclude that $\dim (\mathbf{g}\circ \mathbf{f})\leq n$.
\end{proof}

\begin{proposition}\label{prop.2.10}
Suppose a commutative diagram is given in pro-HPol
\[
\xymatrix{\mathbf{X}\ar[d]_{\mathbf{\alpha}} \ar[r]^{\mathbf{f}} & \mathbf{Y}\ar[d]^{\mathbf{\beta}}\\
\mathbf{X}'\ar[r]_{\mathbf{f}'} & \mathbf{Y}'}
\]
with $\mathbf{\alpha}$ and $\mathbf{\beta}$ isomorphisms. If $\dim \mathbf{f}\leq n$, then $\dim \mathbf{f}'\leq n$.
\end{proposition}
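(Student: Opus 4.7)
The plan is to invoke the two preceding propositions on composition after rewriting $\mathbf{f}'$ in terms of $\mathbf{f}$ and the given isomorphisms. The commutativity of the square yields $\mathbf{f}'\circ \mathbf{\alpha}=\mathbf{\beta}\circ \mathbf{f}$, and since $\mathbf{\alpha}$ is an isomorphism in pro-HPol it admits an inverse $\mathbf{\alpha}^{-1}:\mathbf{X}'\to \mathbf{X}$, so that
\[
\mathbf{f}'=\mathbf{f}'\circ \mathbf{\alpha}\circ \mathbf{\alpha}^{-1}=\mathbf{\beta}\circ \mathbf{f}\circ \mathbf{\alpha}^{-1}.
\]

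First I would apply Proposition \ref{prop.2.8} to the composition $\mathbf{\beta}\circ \mathbf{f}$: since by hypothesis $\dim \mathbf{f}\leq n$, this proposition yields $\dim (\mathbf{\beta}\circ \mathbf{f})\leq n$. Using the commutativity of the diagram this is the same as $\dim (\mathbf{f}'\circ \mathbf{\alpha})\leq n$. Next I would apply Proposition \ref{prop.2.9} with $\mathbf{g}:=\mathbf{f}'\circ \mathbf{\alpha}$ and the pro-morphism $\mathbf{\alpha}^{-1}$ on the right: from $\dim \mathbf{g}\leq n$ we get $\dim (\mathbf{g}\circ \mathbf{\alpha}^{-1})\leq n$, and this composite is precisely $\mathbf{f}'$. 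Hence $\dim \mathbf{f}'\leq n$.

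There is essentially no real obstacle: the argument is a formal consequence of Propositions \ref{prop.2.8} and \ref{prop.2.9} together with the existence of an inverse for an isomorphism in pro-HPol. The only small thing to be careful about is that the notion of dimension of a pro-morphism (Definition \ref{def.2.7}) is defined on pro-morphisms rather than on particular representatives, so that the equalities $\mathbf{f}'\circ \mathbf{\alpha}=\mathbf{\beta}\circ \mathbf{f}$ and $\mathbf{f}'\circ \mathbf{\alpha}\circ \mathbf{\alpha}^{-1}=\mathbf{f}'$ are genuine equalities of pro-morphisms (and not merely of chosen representatives), so Proposition \ref{prop.2.6} ensures the above reasoning transfers correctly to the level of pro-HPol.
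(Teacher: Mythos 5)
Your proof is correct and follows essentially the same route as the paper's own: use the commutative square and the invertibility of $\mathbf{\alpha}$ to write $\mathbf{f}'=(\mathbf{f}'\circ \mathbf{\alpha})\circ \mathbf{\alpha}^{-1}=(\mathbf{\beta}\circ \mathbf{f})\circ \mathbf{\alpha}^{-1}$ and then apply the two composition propositions. In fact you invoke Propositions \ref{prop.2.8} and \ref{prop.2.9} in the roles that actually match their statements (dimension bound on the first-applied factor versus on the second-applied factor), whereas the paper's proof cites them in the opposite order --- a harmless slip there, so your version is, if anything, the cleaner one.
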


\begin{proof}
Because $\dim \mathbf{f}\leq n$, by Proposition \ref{prop.2.9} $\dim (\mathbf{\beta} \circ \mathbf{f})\leq n$, so $\dim (\mathbf{f}'\circ \mathbf{\alpha})\leq n$. But $\mathbf{f}'=(\mathbf{f}'\circ \mathbf{\alpha})\circ {\mathbf{\alpha}}^{-1}$, and then by Proposition \ref{prop.2.8},
we obtain that $\dim \mathbf{f}'\leq n$.
\end{proof}

\begin{example}\label{ex.2.11}
Let $\mathbf{f}:\mathbf{X}\rightarrow \mathbf{Y}=(Y_\mu,q_{\mu\mu'},M)$ be a pro-morphism of the category HPol. Suppose that $(M',\leq )$ is a cofinal subset of $(M,\leq )$. Denote by $\mathbf{Y}'$ the subsistem of $\mathbf{Y} $ defined by $(M',\leq)$, i.e., $\mathbf{Y}'=(Y_{\mu'},q_{\mu'\mu''},M')$ and let $\mathbf{i}:\mathbf{Y}\rightarrow \mathbf{Y}'$ be the restriction morphism and $\mathbf{f}':\mathbf{X}\rightarrow \mathbf{Y}'$ the pro-morphism induced by $f$ and the inclusion $M'\hookrightarrow M$. Then $\mathbf{f}'=\mathbf{i}\circ \mathbf{f}$ and because  $\mathbf{i}$ is an isomorphism in pro-HPol (\cite{Mard-Segal}, Ch.I, \S 1.1, Th. 1), we have that $\dim \mathbf{f}= \dim \mathbf{f}'$.
\end{example}

Because of Proposition \ref{prop.2.10} we can give the following definition.

\begin{definition}\label{def.2.12} 
A shape morphism $F:X\rightarrow Y$ between topological spaces has \emph{shape dimension} $\sd  F\leq n$, $n\geq 0$, provided $F$ admits a representation $\mathbf{f}:\mathbf{X}\rightarrow \mathbf{Y}$ with $\dim \mathbf{f}\leq n$.

In particular, an H-map $f:X\rightarrow Y$ has shape dimension $\sd f\leq n$ if its shape image by the shape functor $S:HTop\rightarrow Sh$ has $sd(S(f))\leq n$. If $f\in \text{Top}(X,Y)$, we write $\sd f\leq n$ if $\sd ([f])\leq n$.

We put $\sd F=n$ (or  $\sd f=n$) provided $n$ is the least $m$ for which $\sd F\leq m$ (resp. $\sd f\leq m$).
\end{definition}

If we combine the above mentioned characterization D3, Example \ref{ex.2.5}, Definitions \ref{def.2.1}, \ref{def.2.7} and \ref{def.2.12}, we obtain the following theorem.

\begin{theorem}\label{cor.2.13}
A topological space $X$ has shape dimension $\sd X\leq n$ if and only if the identity map of $X$ has shape dimension $\sd (1_X)\leq n$.
\end{theorem}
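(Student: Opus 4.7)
My approach is to reduce both implications to a single observation: for any HPol-expansion $\mathbf{p}:X\to\mathbf{X}$, the identity pro-morphism $1_\mathbf{X}$ has $f_{\mu\lambda}=p_{\mu\lambda}$, so the condition $\dim 1_\mathbf{X}\leq n$ from Definition~\ref{def.2.1} coincides word-for-word with condition D3 applied to $\mathbf{p}$.

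For the direction $\sd X\leq n\Rightarrow \sd(1_X)\leq n$, I would pick an HPol$_n$-expansion $\mathbf{p}:X\to\mathbf{X}$ (which exists precisely because $\sd X\leq n$), invoke Example~\ref{ex.2.5} to obtain $\dim 1_\mathbf{X}\leq n$, and observe that $1_\mathbf{X}$ represents the shape morphism $S(1_X)$. Definition~\ref{def.2.12} then yields $\sd(1_X)\leq n$ immediately.

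For the converse, I would fix any HPol-expansion $\mathbf{p}:X\to\mathbf{X}$, so that $S(1_X)$ is represented by $1_\mathbf{X}:\mathbf{X}\to\mathbf{X}$. By hypothesis $S(1_X)$ also admits some representative $\mathbf{f}:\mathbf{X}'\to\mathbf{Y}'$ with $\dim\mathbf{f}\leq n$, relative to possibly different expansions $\mathbf{p}':X\to\mathbf{X}'$ and $\mathbf{q}':X\to\mathbf{Y}'$. Using that any two HPol-expansions of the same space are canonically isomorphic in pro-HPol, together with Proposition~\ref{prop.2.10}, I would transfer $\mathbf{f}$ to a pro-morphism $\mathbf{X}\to\mathbf{X}$ that still has dimension $\leq n$. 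Since this transferred pro-morphism and $1_\mathbf{X}$ both represent the same shape morphism $S(1_X)$, their defining morphisms of inverse systems are equivalent, and Proposition~\ref{prop.2.6} yields $\dim 1_\mathbf{X}\leq n$. Unwinding Definition~\ref{def.2.1} for the identity morphism gives that every $\lambda\in\Lambda$ admits $\lambda'\geq\lambda$ with $p_{\lambda\lambda'}$ factoring in HPol through a polyhedron of dimension $\leq n$, i.e.\ exactly condition D3, whence $\sd X\leq n$.

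The only delicate point is the bookkeeping in the converse direction: the hypothesis produces a representative of $S(1_X)$ between unspecified HPol-expansions, and one must land back on a single chosen expansion $\mathbf{X}$ in order to compare with $1_\mathbf{X}$. Propositions~\ref{prop.2.10} and~\ref{prop.2.6} handle this transfer cleanly, provided they are applied in the correct order.
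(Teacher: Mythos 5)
Your proposal is correct and follows essentially the same route as the paper, which disposes of the theorem in one line by combining condition D3, Example~\ref{ex.2.5}, and Definitions~\ref{def.2.1}, \ref{def.2.7} and \ref{def.2.12}; your key observation that $\dim 1_{\mathbf{X}}\leq n$ unwinds verbatim to D3 (since $f_{\lambda\lambda'}=p_{\lambda\lambda'}$ for the identity morphism) is exactly the point the paper is relying on. Your extra care in the converse direction, invoking Propositions~\ref{prop.2.10} and \ref{prop.2.6} to pass from an arbitrary representative of $S(1_X)$ back to the standard one, just makes explicit the well-definedness that the paper already built into Definition~\ref{def.2.12}.
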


By Propositions \ref{prop.2.8} and \ref{prop.2.9} we have the following corollary.

\begin{corollary}\label{cor.2.14}
Let $F:X\rightarrow Y$, $G:Y\rightarrow Z$ be two shape morphisms of topological sapces. If $\sd F\leq n$ or $\sd G\leq n$, then $\sd (G\circ F)\leq n$.
\end{corollary}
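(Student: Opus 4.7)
The plan is to reduce the statement directly to Propositions \ref{prop.2.8} and \ref{prop.2.9} by choosing expansions carefully so that the composition of shape morphisms is represented by a composition of pro-morphisms in HPol. The main point to check is that Definition \ref{def.2.12} is symmetric enough that we may pick representations of $F$ and $G$ through a \emph{common} HPol-expansion of the intermediate space $Y$; once that is arranged, composition is a purely pro-HPol matter.

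First I would choose HPol-expansions $\mathbf{p}:X\to \mathbf{X}$, $\mathbf{q}:Y\to \mathbf{Y}$ and $\mathbf{r}:Z\to \mathbf{Z}$. Then $F$ is represented by a pro-morphism $\mathbf{f}:\mathbf{X}\to \mathbf{Y}$ and $G$ by a pro-morphism $\mathbf{g}:\mathbf{Y}\to \mathbf{Z}$, and the composition $G\circ F$ is represented by $\mathbf{g}\circ \mathbf{f}:\mathbf{X}\to \mathbf{Z}$. In the case $\sd F\leq n$ I would, using Proposition \ref{prop.2.10} (the well-definedness of $\dim$ on pro-morphisms), assume from the outset that the chosen $\mathbf{f}$ satisfies $\dim \mathbf{f}\leq n$; then Proposition \ref{prop.2.8} yields $\dim(\mathbf{g}\circ \mathbf{f})\leq n$, hence $\sd(G\circ F)\leq n$ by Definition \ref{def.2.12}. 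The case $\sd G\leq n$ is entirely analogous, arranging instead that $\dim \mathbf{g}\leq n$ and invoking Proposition \ref{prop.2.9}.

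The only real obstacle I foresee is the bookkeeping about expansions: a priori, the hypothesis $\sd F\leq n$ only supplies \emph{some} representation of $F$ with small-dimensional representative, possibly relative to expansions different from those one would like to use for $G$. This is precisely what Proposition \ref{prop.2.10} takes care of, since any two HPol-expansions of $Y$ are connected by isomorphisms in pro-HPol, and dimension is preserved under pre- and post-composition with these isomorphisms. So there is no genuine difficulty—once this point is noted, the corollary is a one-line consequence of the two pro-morphism propositions.
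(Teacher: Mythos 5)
Your proposal is correct and follows the same route as the paper, which derives the corollary directly from Propositions \ref{prop.2.8} and \ref{prop.2.9}; your extra care about reconciling the choice of expansions via Proposition \ref{prop.2.10} only makes explicit what the paper leaves implicit.
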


\begin{theorem}\label{thm.2.16}
Let $F:X\rightarrow Y$ be a shape morphism of topological spaces. If $\sd X\leq n$ or $\sd Y\leq n$ then $\sd F\leq n$.
\end{theorem}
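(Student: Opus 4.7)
The plan is to reduce the statement directly to two results already established, namely Theorem \ref{cor.2.13} and Corollary \ref{cor.2.14}. Since the shape dimension of a space is characterized as the shape dimension of its identity shape morphism, we can replace the hypothesis on objects by a hypothesis on morphisms, and then exploit the composition behavior of $\sd$.

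Concretely, suppose first that $\sd X\leq n$. By Theorem \ref{cor.2.13} this is equivalent to $\sd(1_X)\leq n$, where $1_X$ denotes the identity shape morphism of $X$ (the image under the shape functor $S$ of the identity map). In the shape category we have $F=F\circ 1_X$, and applying Corollary \ref{cor.2.14} with the first morphism being $1_X$ (whose shape dimension is $\leq n$) yields $\sd F=\sd(F\circ 1_X)\leq n$.

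For the case $\sd Y\leq n$ the argument is entirely symmetric: again by Theorem \ref{cor.2.13} we have $\sd(1_Y)\leq n$, and since $F=1_Y\circ F$, Corollary \ref{cor.2.14} applied to the composition with $1_Y$ as the second morphism gives $\sd F=\sd(1_Y\circ F)\leq n$.

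The proof has no real obstacle; the entire content is already encoded in Theorem \ref{cor.2.13} (which bridges between dimensions of spaces and of identity morphisms) together with Corollary \ref{cor.2.14} (which states that composing with a shape morphism of small dimension produces a composition of small dimension). The only thing to verify carefully is that the identity shape morphism $1_X=S(1_X^{\text{Top}})$ really does act as a two-sided identity in the shape category, which is immediate from the functoriality of $S:\text{HTop}\to\text{Sh}$.
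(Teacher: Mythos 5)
Your proof is correct and there is no circularity: Theorem \ref{cor.2.13} and Corollary \ref{cor.2.14} are both established in the paper before Theorem \ref{thm.2.16} and neither depends on it. However, your route differs from the paper's. The paper does not pass through identities at all: it fixes HPol-expansions of $X$ and $Y$, observes that $\sd X\leq n$ (resp.\ $\sd Y\leq n$) means $\dim \mathbf{X}\leq n$ (resp.\ $\dim \mathbf{Y}\leq n$) while $\sd F\leq n$ means $\dim (f_\mu,\phi)\leq n$ for a representing morphism, and then applies Proposition \ref{prop_1} directly, which is precisely the system-level version of the statement being proved. Your argument instead stays entirely at the level of the shape category, writing $F=F\circ 1_X$ (or $F=1_Y\circ F$) and combining the identity characterization of $\sd X$ with the composition stability of $\sd$. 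The two proofs are not independent in substance --- the direction of Theorem \ref{cor.2.13} you use is Example \ref{ex.2.5}, which is itself an application of Proposition \ref{prop_1} to the identity morphism of a system --- but yours packages the deduction as a purely formal categorical consequence of previously stated results, which makes the logical architecture transparent, whereas the paper's version is more direct and makes visible where the factorization through a low-dimensional polyhedron actually comes from. Both are valid and essentially equally short.
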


\begin{proof}
Suppose that $\mathbf{p}=(p_\lambda):X\rightarrow \mathbf{X}=(X_\lambda,p_{\lambda\lambda'},\Lambda)$  and $\mathbf{q}=(q_\mu):Y\rightarrow
\mathbf{Y}=(Y_\mu,q_{\mu\mu'},M)$ are HPol-expansions for $X$ and $Y$ respectively, and let $\mathbf{f}=(f_\mu,\phi):\mathbf{X}\rightarrow \mathbf{Y}$ determine a shape morphism $F:X\to Y$.

Since $\sd X\leq n$ ($\sd Y\leq n$) means $\dim \mathbf{X}\leq n$ ($\dim \mathbf{Y}\leq n$) and $\sd F\leq n$ means $\dim (f_\mu,\phi)\leq n$, it only remains to apply Proposition \ref{prop_1}.
\end{proof}

\begin{corollary}\label{cor.2.17}
For a continuous map $f:X\rightarrow Y$, $\sd f\leq \min (\sd X, \sd Y)$.
\end{corollary}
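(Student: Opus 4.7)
The plan is to derive this immediately from Theorem \ref{thm.2.16}. First I would reduce to the finite case: if $\min(\sd X, \sd Y)=\infty$, the inequality $\sd f\leq \infty$ holds vacuously, so assume $n:=\min(\sd X, \sd Y)<\infty$. By definition of the minimum, at least one of $\sd X\leq n$ or $\sd Y\leq n$ then holds.

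Next I would let $F=S([f])$ be the shape morphism of topological spaces attached to the continuous map $f$ via the shape functor $S:\mathrm{HTop}\to \mathrm{Sh}$, as in Definition \ref{def.2.12}. Whichever of the two inequalities $\sd X\leq n$, $\sd Y\leq n$ is in force, Theorem \ref{thm.2.16} applied to $F$ yields $\sd F\leq n$. Unwinding Definition \ref{def.2.12}, $\sd F\leq n$ is exactly $\sd f\leq n$, which is the claim.

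There is no real obstacle here: the corollary is purely a packaging of Theorem \ref{thm.2.16} into a single inequality, and the only point to notice is the standard convention that $a\leq \infty$ is always true, which makes the statement also valid (and trivial) in the case where one or both of $\sd X,\sd Y$ are infinite. Consequently the proof can be written in one or two lines, with no additional construction beyond invoking the theorem.
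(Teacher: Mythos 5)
Your proof is correct and is exactly the route the paper intends: the corollary is stated as an immediate consequence of Theorem \ref{thm.2.16} (the paper gives no separate argument), obtained by taking $n=\min(\sd X,\sd Y)$ and handling the infinite case by convention. Nothing further is needed.
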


Since for a topological space $X$ one has the inequality $\sd X\leq \dim X$ (\cite{Mard-Segal}, Ch.II, \S 1.1, Th. 3), by Theorem \ref{thm.2.16}, we obtain the following corollary.

\begin{corollary}\label{cor.2.19}
Let $f:X\rightarrow Y$ be a continuous map. If $\dim X\leq n$  or $\dim Y\leq n$, then $\sd f\leq n$.
\end{corollary}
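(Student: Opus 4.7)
The plan is to chain together two facts that are already in hand: the classical inequality $\sd X \leq \dim X$ for any topological space (Mardesic--Segal, Ch.~II, \S1.1, Th.~3, cited just before the corollary), and Theorem \ref{thm.2.16}, which lets us pass from a bound on $\sd X$ or $\sd Y$ to a bound on $\sd F$ for any shape morphism $F : X \to Y$.

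Concretely, suppose first that $\dim X \leq n$. Then by the cited inequality $\sd X \leq \dim X \leq n$. The continuous map $f$ determines an H-map $[f] \in \mathrm{HTop}(X,Y)$, and applying the shape functor $S : \mathrm{HTop} \to \mathrm{Sh}$ yields a shape morphism $S([f]) : X \to Y$. By Definition \ref{def.2.12} we have $\sd f = \sd(S([f]))$, and by Theorem \ref{thm.2.16} applied to this shape morphism we conclude $\sd f \leq n$. The case $\dim Y \leq n$ is identical, using $\sd Y \leq \dim Y \leq n$ and the other half of Theorem \ref{thm.2.16}.

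There is no real obstacle here; the entire content of the corollary is the combination of the two inputs, and the only thing to be careful about is to name the shape morphism induced by $f$ precisely (so that Theorem \ref{thm.2.16}, which is stated for shape morphisms, applies directly to $\sd f$ as defined in Definition \ref{def.2.12}).
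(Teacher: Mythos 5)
Your proof is correct and matches the paper's own argument exactly: the corollary is stated immediately after the remark that $\sd X\leq \dim X$ and is derived by combining that inequality with Theorem \ref{thm.2.16}. The only addition you make is the (harmless and indeed careful) explicit passage from $f$ to the shape morphism $S([f])$, which the paper leaves implicit.
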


\begin{corollary}\label{cor.2.23}
If a shape morphism $F:X\rightarrow Y$ of topological spaces factors in the shape category $\text{Sh}_{\text{(HTop,\,HPol)}}$ by a topological space $Z$ of shape dimension $\sd Z\leq n$ then $\sd F\leq n$.

In particular, if a continuous map $f:X\rightarrow Y$ factors in the category Top by a topological space $Z$ of shape dimension $\sd Z\leq n$ then $\sd f\leq n$.
\end{corollary}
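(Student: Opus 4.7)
The plan is to reduce the corollary to the two main tools already available: Theorem \ref{thm.2.16} (which converts dimension information about a space into dimension information about a shape morphism having that space as domain or codomain) and Corollary \ref{cor.2.14} (which states that a composite of shape morphisms inherits the shape dimension bound from either factor).

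Concretely, I would first write the factorization hypothesis as $F = H \circ G$, where $G : X \to Z$ and $H : Z \to Y$ are shape morphisms and $\sd Z \leq n$. Then I would apply Theorem \ref{thm.2.16} to $G$ (whose codomain $Z$ has shape dimension $\leq n$), obtaining $\sd G \leq n$. Finally, invoking Corollary \ref{cor.2.14} on the composite $F = H \circ G$ yields $\sd F \leq n$, which is what is claimed. Equivalently, one could instead apply Theorem \ref{thm.2.16} to $H$ (whose domain has shape dimension $\leq n$) and then use Corollary \ref{cor.2.14}; either route works because Corollary \ref{cor.2.14} requires only one factor to satisfy the dimension bound.

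For the ``in particular'' part, I would observe that if $f = h \circ g$ is a factorization in Top with $g : X \to Z$ and $h : Z \to Y$ continuous, then passing to homotopy classes gives $[f] = [h] \circ [g]$ in HTop, and applying the shape functor $S : \text{HTop} \to \text{Sh}_{(\text{HTop},\text{HPol})}$ yields $S([f]) = S([h]) \circ S([g])$. This is a factorization of the shape morphism $S([f])$ through $Z$ in the shape category, so the first part of the corollary applies and gives $\sd f \leq n$ by the definition of $\sd f$ recalled in Definition \ref{def.2.12}.

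There is no real obstacle here; the statement is essentially a formal consequence of two results proved a few lines earlier. The only point worth being careful about is the direction in which Theorem \ref{thm.2.16} is used (it converts a bound on $\sd Z$ into a bound on any shape morphism into or out of $Z$, independently of whether $Z$ appears as domain or codomain), and the fact that the shape functor preserves composition, so that a Top-level factorization automatically produces a shape-level factorization.
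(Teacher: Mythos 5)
Your proposal is correct and follows essentially the same route as the paper: the paper also writes $F = H\circ G$, applies Theorem \ref{thm.2.16} to deduce the dimension bound on a factor, and concludes with Corollary \ref{cor.2.14}. Your explicit treatment of the ``in particular'' part via functoriality of $S$ is a detail the paper leaves implicit, but it is the intended argument.
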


\begin{proof}
Suppose that $F=H\circ G$, for shape morphisms $G:X\rightarrow Z$ and $H:Z\rightarrow Y$, where $\sd  Z\leq n$.  It follows from Theorem \ref{thm.2.16} that $\sd G\leq n$ and $\sd  H\leq n$.  Hence, by Corollary \ref{cor.2.14} $\sd F\leq n$.
\end{proof}

\begin{example}\label{ex.2.24}
Using the last corollary one can easily see that if $f:X\rightarrow Y$ is nullhomotopic then $\sd f=0$.
\end{example}

\begin{example}\label{ex.2.20} 
Let $Q$ be the Hilbert cube,  seen as the product of countably infinitely many copies of the unit interval $[0, 1]$,  and $\mathbb{T}^\omega$ the infinite-dimensional torus, as the product of countably infinitely many copies of the unit circle $S^1$. Consider the maps $g:Q\rightarrow \mathbb{T}^\omega$, defined  as the product of countably infinitely many copies of the usual exponential map $exp:[0,1]\rightarrow S^1, exp(t)=e^{2\pi i t}$, and, for $n\geq 0$,  $h_n:\mathbb{T}^\omega\rightarrow \mathbb{T}^\omega$ the projection on the $n$-dimensional torus $h_n((z_k)_k)=(z_1, \ldots ,z_n,1,1, \ldots)$ ($h_0$ is the constant map $(z_k)_k\rightarrow (1,1,\ldots))$. Then if $X$ is the topological sum $X:=Q\oplus \mathbb{T}^\omega$, define the map $\underline{f:X\rightarrow \mathbb{T}^\omega}$ by  $f|Q=g$ and $f|\mathbb{T}^\omega=h_n$. Now we have $\underline{\sd X=\infty}$, $\underline{\sd  \mathbb{T}^\omega=\infty}$. Since $Q$ is contractible it follows that $\sd Q=0$, so $\sd g=0$ (see Theorem \ref{thm.2.16}). Then  by Example \ref{ex.2.4} we have $\sd h=n$. In this way we obtain $\underline{\sd f=n}$.
\end{example}

\begin{remark}\label{rem.2.21}
Example \ref{ex.2.20}, the underlined relations together with the fact that the map $f$ is surjective  show that the notion of shape dimension of a map  $f:X\rightarrow Y$ is  consistent, independent of shape dimensions of $X$ and $Y$. This enables us to give some important new shape properties of $f$.
\end{remark}

The next two theorems show that the converse of Theorem \ref{thm.2.16} is true for a shape morphism which has a left or right inverse.

\begin{theorem}\label{prop_2}
Let $F:X\rightarrow Y$ be a shape morphism of topological spaces which has a left inverse. If $\sd F\leq n$
then $\sd X \leq n$.
\end{theorem}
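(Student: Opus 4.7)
The plan is a short chain of earlier results, with no new construction required. By hypothesis, the shape morphism $F:X\to Y$ admits a left inverse, i.e., there is a shape morphism $G:Y\to X$ with $G\circ F=1_X$ in $\text{Sh}_{\text{(HTop,\,HPol)}}$.

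First I would invoke Corollary \ref{cor.2.14}: since $\sd F\leq n$, composing on the left with any shape morphism $G$ preserves this bound, so $\sd(G\circ F)\leq n$. But $G\circ F=1_X$, so we obtain $\sd(1_X)\leq n$. Then I would apply Theorem \ref{cor.2.13}, which states that $\sd X\leq n$ if and only if $\sd(1_X)\leq n$, to conclude $\sd X\leq n$.

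There is no real obstacle here; the statement is essentially a formal consequence of (i) the ``one factor suffices'' behavior of shape dimension under composition (Propositions \ref{prop.2.8}--\ref{prop.2.9}, packaged as Corollary \ref{cor.2.14}) together with (ii) the identification of the shape dimension of a space with that of its identity map (Theorem \ref{cor.2.13}). The only subtlety worth flagging is that ``left inverse'' is meant in the shape category (not in $\mathrm{HTop}$), so that $G$ is a genuine shape morphism to which Corollary \ref{cor.2.14} may be applied; this is automatic from the statement.
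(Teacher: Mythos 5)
Your proof is correct and is essentially identical to the paper's: both apply Corollary \ref{cor.2.14} to get $\sd(G\circ F)=\sd(1_X)\leq n$ and then conclude via Theorem \ref{cor.2.13}. No issues.
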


\begin{proof}
Let $G:Y\to X$ be a left inverse of $F:X\to Y$, i.e., $G\circ F=1_X$. If $\sd F\leq n$ then by Corollary \ref{cor.2.14} $\sd (G\circ F)\leq n$, i.e., $\sd 1_{X}\leq n$. Therefore, by Theorem \ref{cor.2.13}, $\sd X \leq n$.
\end{proof}

The following theorem is proved in the same way.

\begin{theorem}\label{th_1}
Let $F:X\rightarrow Y$ be a shape morphism of topological spaces which has a right inverse. If $\sd F\leq n$
then $\sd Y \leq n$.
\end{theorem}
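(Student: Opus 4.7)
The plan is to mirror the argument given for Theorem \ref{prop_2}, just swapping the roles of the domain and codomain. Since a right inverse $G:Y\rightarrow X$ of $F$ satisfies $F\circ G=1_Y$, the identity $1_Y$ factors through $F$ in the shape category. The desired conclusion $\sd Y\leq n$ should therefore fall out of the composition behavior of shape dimension already established, together with Theorem \ref{cor.2.13}.

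Concretely, I would first invoke the hypothesis: let $G:Y\rightarrow X$ be a shape morphism with $F\circ G=1_Y$. Next, since $\sd F\leq n$, Corollary \ref{cor.2.14} applied to the composition $F\circ G$ yields $\sd (F\circ G)\leq n$. But $F\circ G=1_Y$, so $\sd 1_Y\leq n$. Finally, Theorem \ref{cor.2.13} translates this into $\sd Y\leq n$, which is what was to be shown.

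I do not expect any obstacle here: the proof is entirely formal, relying only on the fact that dimension bounds are inherited by the composite when either factor already has dimension $\leq n$ (the content of Corollary \ref{cor.2.14}, itself a consequence of Propositions \ref{prop.2.8} and \ref{prop.2.9}), and on the equivalence between the shape dimension of a space and the shape dimension of its identity morphism. The only reason the left- and right-invertible cases are stated as two separate theorems is that Corollary \ref{cor.2.14} is symmetric in the two factors, so whichever side the identity appears on, one can still pull the dimension bound back onto the corresponding space.
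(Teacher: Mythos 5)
Your proof is correct and is exactly the argument the paper intends: the paper states that Theorem \ref{th_1} ``is proved in the same way'' as Theorem \ref{prop_2}, namely by applying Corollary \ref{cor.2.14} to $F\circ G=1_Y$ and then invoking Theorem \ref{cor.2.13}.
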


By Theorems  \ref{thm.2.16}, \ref{prop_2} and \ref{th_1} one can obtain the following result.

\begin{theorem}\label{cor.2.15}
Let $F:X\rightarrow Y$ be a shape isomorphism of topological spaces. If $\sd F=n$ then $\sd X=\sd Y=n$.
\end{theorem}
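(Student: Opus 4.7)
The plan is to combine the three referenced theorems with the minimality clause in Definition \ref{def.2.12}. Since $F:X\to Y$ is a shape isomorphism, it has a two-sided shape inverse $G:Y\to X$ with $G\circ F=1_X$ and $F\circ G=1_Y$; in particular $F$ has both a left inverse and a right inverse, so Theorems \ref{prop_2} and \ref{th_1} both apply simultaneously.

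First I would use the hypothesis $\sd F = n$, which in particular gives $\sd F \leq n$. Applying Theorem \ref{prop_2} to the left inverse $G$ yields $\sd X \leq n$, and applying Theorem \ref{th_1} to the right inverse $G$ yields $\sd Y \leq n$. This gives one half of each desired equality.

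For the reverse inequalities, I would argue by contradiction using the minimality built into the notation $\sd F = n$. Suppose $\sd X \leq m$ for some $m < n$. Then Theorem \ref{thm.2.16} forces $\sd F \leq m < n$, contradicting the fact that $n$ is the \emph{least} integer with $\sd F \leq n$. Hence $\sd X \geq n$, and combined with the previous step we conclude $\sd X = n$. The identical argument with $\sd Y$ in place of $\sd X$ (again invoking Theorem \ref{thm.2.16}) gives $\sd Y = n$.

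There is no real obstacle here: the theorem is essentially a direct corollary, and the proof amounts to packaging the three preceding theorems correctly together with the minimality convention for $\sd F = n$. The only point requiring mild attention is making explicit that a shape isomorphism provides simultaneously a left and a right inverse, so that Theorems \ref{prop_2} and \ref{th_1} can be invoked for the same morphism $G$.
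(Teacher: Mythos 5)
Your proposal is correct and matches the paper's intended argument exactly: the paper derives this theorem directly from Theorems \ref{thm.2.16}, \ref{prop_2} and \ref{th_1}, and your write-up simply makes explicit the use of the two-sided inverse for the upper bounds and the minimality of $n$ (via Theorem \ref{thm.2.16}) for the lower bounds.
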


Note that Theorem \ref{cor.2.15} gives a new proof of the shape invariance of the shape dimension of a space.

\begin{definition}\label{def.2.25}We say that a morphism of pointed  inverse systems $((f_\mu,\ast),\phi):(\mathbf{X},\ast)=((X_\lambda,\ast)),p_{\lambda\lambda'},\Lambda)\rightarrow (\mathbf{Y},\ast)=((Y_\mu,\ast),q_{\mu\mu'},M)$ in the category HPol$_\ast$  has dimension $\dim ((f_\mu,\ast),\phi)\leq n$ if every $\mu\in M$ admits a $\lambda\geq \phi(\mu)$ such that the pointed H-map $(f_{\mu\lambda},\ast):(X_\lambda,\ast)\rightarrow (Y_\mu,\ast)$ factors in HPol$_\ast$ through a pointed polyhedron $(P,\ast)$ with $\dim P\leq n$, i.e., there are pointed H-maps $(u,\ast):(X_\lambda,\ast)\rightarrow (P,\ast)$, $(v,\ast):(P,\ast)\rightarrow (Y_\mu,\ast)$ such that $(f_{\mu\lambda},\ast)=(v,\ast)\circ (u,\ast)$.
\end{definition}
\begin{remark}\label{rem.2.26}With this definition all other definitions and results from the unpointed case remain valid in the pointed case.
\end{remark}

\section{More properties and some applications}

\begin{theorem}\label{thm.3.1} Let $\mathbf{f}:\mathbf{X}=(X_\lambda,p_{\lambda\lambda'},\Lambda)\rightarrow \mathbf{Y}=(Y_\mu,q_{\mu\mu'},M)$ be a morphism in the cagtegory pro-HPol. If $\dim \mathbf{f}\leq n$ then there exists a representation of $\mathbf{f}$  by a morphism $(f'_\mu,\phi'):\mathbf{X}\rightarrow \mathbf{Y}$ with the property that for every $\mu\in M$, the morphism $f'_\mu:X_{\phi'(\mu)}\rightarrow Y_\mu$ factors in HPol through a polyhedron $P$ with $\dim P\leq n$.
\end{theorem}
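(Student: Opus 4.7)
The plan is to construct the desired representative by absorbing the bonding morphism that witnesses factorizability into the definition of each component.

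First I would start from an arbitrary representative $(f_\mu,\phi)$ of $\mathbf{f}$ with $\dim(f_\mu,\phi)\leq n$, which exists by Definition \ref{def.2.7}. Applying Definition \ref{def.2.1}, for every $\mu\in M$ I can choose an index $\phi'(\mu)\in\Lambda$ with $\phi'(\mu)\geq\phi(\mu)$ such that the composite
\[
f_{\mu\phi'(\mu)}=f_\mu\circ p_{\phi(\mu)\phi'(\mu)}:X_{\phi'(\mu)}\longrightarrow Y_\mu
\]
factors in HPol through a polyhedron $P_\mu$ with $\dim P_\mu\leq n$. I then set $f'_\mu:=f_{\mu\phi'(\mu)}$. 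By construction each $f'_\mu$ already has the asserted factorization, so the content of the theorem will follow once I verify that $(f'_\mu,\phi')$ is a morphism of inverse systems and that it represents the same pro-morphism $\mathbf{f}$.

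Next I would verify the commutativity condition making $(f'_\mu,\phi')$ a morphism $\mathbf{X}\to\mathbf{Y}$. Given $\mu\leq\mu'$ in $M$, since $(f_\mu,\phi)$ is a morphism of inverse systems there exists $\lambda_0\in\Lambda$ with $\lambda_0\geq\phi(\mu),\phi(\mu')$ satisfying $q_{\mu\mu'}\circ f_{\mu'}\circ p_{\phi(\mu')\lambda_0}=f_\mu\circ p_{\phi(\mu)\lambda_0}$. Choose $\lambda\geq\lambda_0,\phi'(\mu),\phi'(\mu')$. Then a short computation using the identity $p_{\phi(\mu')\phi'(\mu')}\circ p_{\phi'(\mu')\lambda}=p_{\phi(\mu')\lambda}$ (and likewise for $\mu$) in HPol shows
\[
q_{\mu\mu'}\circ f'_{\mu'}\circ p_{\phi'(\mu')\lambda}=q_{\mu\mu'}\circ f_{\mu'}\circ p_{\phi(\mu')\lambda}=f_\mu\circ p_{\phi(\mu)\lambda}=f'_\mu\circ p_{\phi'(\mu)\lambda},
\]
which is precisely the required compatibility for $(f'_\mu,\phi')$.

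Finally I would check that $(f_\mu,\phi)\sim(f'_\mu,\phi')$, so that the two morphisms represent the same pro-morphism $\mathbf{f}$. This is immediate: for every $\mu\in M$, taking $\lambda=\phi'(\mu)\geq\phi(\mu),\phi'(\mu)$, one has
\[
f_\mu\circ p_{\phi(\mu)\phi'(\mu)}=f_{\mu\phi'(\mu)}=f'_\mu=f'_\mu\circ p_{\phi'(\mu)\phi'(\mu)}.
\]
Thus $(f'_\mu,\phi')$ represents $\mathbf{f}$ and, by construction of $\phi'$, satisfies the statement of the theorem.

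The proof is essentially a bookkeeping argument; there is no serious obstacle. The only point that requires any care is keeping the indices straight when verifying the compatibility of $(f'_\mu,\phi')$ as a morphism of inverse systems, since one has to enlarge $\lambda$ beyond both the new indices $\phi'(\mu),\phi'(\mu')$ and the old witness $\lambda_0$ for the original morphism $(f_\mu,\phi)$.
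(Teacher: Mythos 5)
Your proposal is correct and follows essentially the same route as the paper's own proof: absorb the witnessing bonding morphism into each component by setting $\phi'(\mu)$ equal to the index provided by Definition \ref{def.2.1} and $f'_\mu=f_{\mu\phi'(\mu)}$, then verify the inverse-system compatibility by enlarging the index past $\phi'(\mu)$, $\phi'(\mu')$ and the old witness, and finally note the equivalence $(f'_\mu,\phi')\sim(f_\mu,\phi)$. No substantive differences.
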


\begin{proof}
Suppose that $\mathbf{f}$ is given by an arbitrary morphism $(f_\mu,\phi):\mathbf{X}\rightarrow \mathbf{Y}$. Then by hypothesis, every $\mu \in M$
admits an index $\lambda_\mu\in \Lambda, \lambda_\mu\geq \phi(\mu)$, a polyhedron $P$ with $\dim P\leq n$ and two H-maps $u:X_{\lambda_\mu}\rightarrow P$, $v:P\rightarrow Y_\mu$ such that
\[
f_{\mu\lambda_\mu}=v\circ u.
\]
Then we define a new function $\phi':M\rightarrow \Lambda$ by
\[
\phi'(\mu)=\lambda_\mu,
\]
and consider the H-maps $f'_\mu:X_{\phi'(\mu)}\rightarrow Y_\mu$ given by
\[
f'_\mu=f_{\mu\lambda_\mu}.
\]
Now we prove that the set of H-maps $\{f'_\mu;\ \mu\in M\}$ defines a morphism of inverse systems $(f'_\mu,\phi'):\mathbf{X}\rightarrow \mathbf{Y}$.
Indeed, if $\mu,\mu'\in M$ with $\mu'\geq \mu$, then because $(f_\mu,\phi)$ is a morphism of inverse systems, there exists $\overline{\lambda}\geq \phi(\mu),\phi(\mu')$ such that
\[
q_{\mu\mu'}\circ f_{\mu'}\circ p_{\phi(\mu')\overline{\lambda}}=f_\mu\circ p_{\phi(\mu)\overline{\lambda}}.
\]
Now consider an index $\widetilde{\lambda}\in \Lambda$, $\widetilde{\lambda}\geq \lambda_\mu,\lambda_{\mu'},\overline{\lambda}$. Then we have:
\begin{multline*}
q_{\mu\mu'}\circ f'_{\mu'}\circ p_{\phi'(\mu')\widetilde{\lambda}}=q_{\mu\mu'}\circ f_{\mu'}\circ p_{\phi(\mu')\lambda_{\mu'}}\circ
p_{\lambda_{\mu'}\widetilde{\lambda}}=\\
q_{\mu\mu'}\circ f_{\mu'}\circ (p_{\phi(\mu')\phi'(\mu')}\circ p_{\phi'(\mu')\widetilde{\lambda}})=
q_{\mu\mu'}\circ f_{\mu'}\circ p_{\phi(\mu')\overline{\lambda}}\circ p_{\overline{\lambda} \widetilde{\lambda}}=f_\mu\circ p_{\phi(\mu)\overline{\lambda}}\circ p_{\overline{\lambda} \widetilde{\lambda}}=\\
(f_\mu\circ p_{\phi(\mu)\phi'(\mu)})\circ p_{\phi'(\mu)\widetilde{\lambda}}=f'_\mu\circ p_{\phi'(\mu)\widetilde{\lambda}}.
\end{multline*} 
Therefore $(f'_\mu,\phi')$ is a morphism of inverse systems. 
In addition, obviously $(f'_\mu,\phi')\sim (f_\mu,\phi)$ because for every $\mu\in M$, $f_\mu\circ p_{\phi(\mu)\phi'(\mu)}=f'_\mu\circ p_{\phi'(\mu)\phi'(\mu)}$. So, $[(f'_\mu,\phi')]=\mathbf{f}$.

Finally, for every $\mu\in M$, we have $f'_\mu=v\circ u$, with $u:X_{\phi'(\mu)}\rightarrow P$, $v:P\rightarrow Y_\mu$, and $\dim P\leq n$, which ends the proof.
\end{proof}

Using Theorem \ref{thm.3.1} and \cite{Mard-Segal} (Ch.I, \S 1.3, Theorem 2) we obtain the following result.

\begin{theorem}\label{th_3}
If $F:X\rightarrow Y$ is a shape morphism of topological spaces with $\sd F\leq n$ then there exist a cofinite directed ordered set $(N,\leq )$ and a level morphism of inverse systems in the category HPol $(f_\nu):\mathbf{X}=(X_\nu,p_{\nu\nu'},N)\rightarrow \mathbf{Y}=(Y_\nu,q_{\nu\nu'},N)$ defining a pro-morphism $\mathbf{f}:\mathbf{X}\rightarrow \mathbf{Y}$ which in turn is a representation of $F$, and such that for every $\nu\in N$ the morphism $f_\nu:X_\nu\rightarrow Y_\nu$ factors in HPol through a polyhedron $P$ with $\dim P\leq n$.
\end{theorem}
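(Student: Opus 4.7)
The plan is to combine Theorem \ref{thm.3.1} with the standard level representation theorem from Mardesi\'c--Segal (\cite{Mard-Segal}, Ch.I, \S 1.3, Theorem 2). As a first step, I would apply Theorem \ref{thm.3.1} to obtain a representative $(f'_\mu,\phi'):\mathbf{X}=(X_\lambda,p_{\lambda\lambda'},\Lambda)\to \mathbf{Y}=(Y_\mu,q_{\mu\mu'},M)$ of the pro-morphism $\mathbf{f}$ underlying $F$, such that for every $\mu\in M$ the H-map $f'_\mu: X_{\phi'(\mu)}\to Y_\mu$ factors as $f'_\mu = v_\mu\circ u_\mu$ through a polyhedron $P_\mu$ with $\dim P_\mu\leq n$. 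This is the crucial input supplied by our dimension hypothesis.

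Next, I would invoke the cited Mardesi\'c--Segal theorem, which asserts that every morphism of inverse systems is isomorphic in pro-HPol to a level morphism indexed by a common cofinite directed set. Applied to $(f'_\mu,\phi')$, this produces a cofinite directed set $(N,\leq)$, reindexed systems $\mathbf{X}'=(X_\nu,p_{\nu\nu'},N)$ and $\mathbf{Y}'=(Y_\nu,q_{\nu\nu'},N)$ with pro-HPol isomorphisms $\mathbf{X}\cong\mathbf{X}'$ and $\mathbf{Y}\cong\mathbf{Y}'$, and a level morphism $(f_\nu):\mathbf{X}'\to\mathbf{Y}'$ representing $\mathbf{f}$. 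Composing the original HPol-expansions $X\to\mathbf{X}$, $Y\to\mathbf{Y}$ with these isomorphisms yields HPol-expansions $X\to\mathbf{X}'$ and $Y\to\mathbf{Y}'$ relative to which $(f_\nu)$ represents the shape morphism $F$ itself.

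The remaining point is to verify that the factorization property carries over to each component $f_\nu$. The standard reindexing assigns to every $\nu\in N$ indices $\mu(\nu)\in M$ and $\lambda(\nu)\in\Lambda$ with $\lambda(\nu)\geq\phi'(\mu(\nu))$, in such a way that $X_\nu=X_{\lambda(\nu)}$, $Y_\nu=Y_{\mu(\nu)}$ and $f_\nu=f'_{\mu(\nu)}\circ p_{\phi'(\mu(\nu))\lambda(\nu)}$. Substituting the factorization $f'_{\mu(\nu)}=v_{\mu(\nu)}\circ u_{\mu(\nu)}$ gives $f_\nu=v_{\mu(\nu)}\circ(u_{\mu(\nu)}\circ p_{\phi'(\mu(\nu))\lambda(\nu)})$, exhibiting $f_\nu$ as factoring through the polyhedron $P_{\mu(\nu)}$ of dimension $\leq n$. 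This is exactly the stability of factorizations under precomposition with bonding maps noted in Remark \ref{rem.2.2}.

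The main obstacle I anticipate is essentially bookkeeping: one has to unpack the Mardesi\'c--Segal construction carefully enough to confirm that each level component $f_\nu$ has the explicit form $f'_{\mu(\nu)}$ precomposed with a bonding map of $\mathbf{X}$ (rather than some more convoluted composition that would obscure the factorization), and that the chosen expansions make $(f_\nu)$ into a representation of $F$ and not merely of some isomorphic shape morphism. Both points are direct from the explicit nature of the reindexing, so once they are spelled out the conclusion follows immediately from Theorem \ref{thm.3.1} together with Remark \ref{rem.2.2}.
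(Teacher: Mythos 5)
Your proposal is correct and follows exactly the route the paper takes: the paper's entire proof is the single sentence ``Using Theorem \ref{thm.3.1} and \cite{Mard-Segal} (Ch.I, \S 1.3, Theorem 2) we obtain the following result,'' and you have simply filled in the bookkeeping (that the level components have the form $f'_{\mu(\nu)}\circ p_{\phi'(\mu(\nu))\lambda(\nu)}$ and hence still factor, as in Remark \ref{rem.2.2}) that the authors leave implicit.
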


The following theorem characterizes the shape dimension of map. 

\begin{theorem}\label{th_2}
An H-map $f:X\rightarrow Y$ has shape dimension $\sd f\leq n$ if and only if for every H-map $h:Y\rightarrow P$ into a space $P\in$ HPol, the composition $h\circ f$  factors through a polyhedron $P'$ with $\dim P'\leq n$, i.e., there are H-maps $u:X\rightarrow P'$ and $v:P'\rightarrow P$ such that $h\circ f=v\circ u$.
\end{theorem}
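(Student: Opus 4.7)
The plan is to prove both implications by exploiting the universal and approximation properties of HPol-expansions, using Theorem \ref{th_3} to handle the forward direction and reducing the converse to a level-factorization argument.

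For the forward direction, assume $\sd f\leq n$. By Theorem \ref{th_3} the shape morphism of $f$ admits a representation by a level morphism $(f_\nu):\mathbf{X}=(X_\nu,p_{\nu\nu'},N)\rightarrow \mathbf{Y}=(Y_\nu,q_{\nu\nu'},N)$ such that each $f_\nu$ factors through a polyhedron of dimension $\leq n$. Given an H-map $h:Y\rightarrow P$ with $P\in$ HPol, the universal property of the HPol-expansion $\mathbf{q}=(q_\nu):Y\rightarrow \mathbf{Y}$ produces an index $\nu\in N$ and an H-map $h_\nu:Y_\nu\rightarrow P$ with $h\simeq h_\nu\circ q_\nu$. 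Since $(f_\nu)$ represents the shape morphism of $f$, we have $q_\nu\circ f\simeq f_\nu\circ p_\nu$; writing $f_\nu=v\circ u$ with $u:X_\nu\rightarrow P'$, $v:P'\rightarrow Y_\nu$ and $\dim P'\leq n$, we conclude $h\circ f\simeq (h_\nu\circ v)\circ (u\circ p_\nu)$, the desired factorization.

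For the converse, fix HPol-expansions $\mathbf{p}:X\rightarrow \mathbf{X}=(X_\lambda,p_{\lambda\lambda'},\Lambda)$ and $\mathbf{q}:Y\rightarrow \mathbf{Y}=(Y_\mu,q_{\mu\mu'},M)$, and let $(f_\mu,\phi):\mathbf{X}\rightarrow \mathbf{Y}$ represent the shape morphism of $f$, so that $q_\mu\circ f\simeq f_\mu\circ p_{\phi(\mu)}$ for each $\mu\in M$. Given $\mu\in M$, apply the hypothesis to the H-map $q_\mu:Y\rightarrow Y_\mu$: there exist a polyhedron $P'$ with $\dim P'\leq n$ and H-maps $u:X\rightarrow P'$, $v:P'\rightarrow Y_\mu$ with $q_\mu\circ f\simeq v\circ u$. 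Since $P'\in$ HPol, the universal property of $\mathbf{p}$ supplies an index $\lambda\in\Lambda$, which we may take with $\lambda\geq \phi(\mu)$, and an H-map $u_\lambda:X_\lambda\rightarrow P'$ with $u\simeq u_\lambda\circ p_\lambda$. Then
\[
f_{\mu\lambda}\circ p_\lambda \;=\; f_\mu\circ p_{\phi(\mu)\lambda}\circ p_\lambda \;\simeq\; q_\mu\circ f \;\simeq\; v\circ u_\lambda\circ p_\lambda.
\]
The uniqueness part of the defining property of the HPol-expansion $\mathbf{p}$ (applied to the two H-maps $f_{\mu\lambda}, v\circ u_\lambda:X_\lambda\rightarrow Y_\mu\in$ HPol) now provides $\lambda'\geq \lambda$ with $f_{\mu\lambda}\circ p_{\lambda\lambda'}\simeq v\circ u_\lambda\circ p_{\lambda\lambda'}$, i.e.\ $f_{\mu\lambda'}\simeq v\circ (u_\lambda\circ p_{\lambda\lambda'})$, exhibiting $f_{\mu\lambda'}$ as factoring through $P'$. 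Thus $\dim(f_\mu,\phi)\leq n$ and hence $\sd f\leq n$.

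The main difficulty lies in the converse implication: the hypothesis only yields a factorization at the ``global'' level of $Y$, while the definition of $\sd f$ demands one at some finite stage $X_\lambda\rightarrow Y_\mu$. Bridging this gap requires both halves of the HPol-expansion property — existence to bring $u:X\rightarrow P'$ down to a level map $u_\lambda$, and essential uniqueness to force the two level maps into $Y_\mu$ to coincide after composing with a bonding map — and one must carefully choose the index $\lambda$ to dominate both $\phi(\mu)$ and the index extracted from approximating $u$.
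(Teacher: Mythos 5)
Your proof is correct and follows essentially the same route as the paper: pull $h$ back to a term $Y_\mu$ of the expansion, use the level factorization of $f_{\mu\lambda}$ for the forward direction, and for the converse apply the hypothesis to $h=q_\mu$ and then use both the existence and uniqueness clauses of the HPol-expansion $\mathbf{p}$ to descend the factorization to some $f_{\mu\lambda'}$. The only cosmetic difference is that you invoke Theorem \ref{th_3} to get a level morphism in the forward direction, where the paper works directly with an arbitrary representative $(f_\mu,\phi)$ and the definition of $\dim(f_\mu,\phi)\leq n$.
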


\begin{proof}
Let  $\sd f\leq n$. Consider a morphism $\mathbf{f}:\mathbf{X}=(X_\lambda,p_{\lambda\lambda'},\Lambda)\rightarrow \mathbf{Y}=(Y_\mu,q_{\mu\mu'},M)$ in the cagtegory pro-HPol, induced by an H-map $f:X\rightarrow Y$. Let $(f_\mu, \phi)$ be a representative of $\mathbf{f}$.

For any  H-map $h:Y\rightarrow P$ into a space $P\in$ HPol there are a $\mu \in M$ and an H-map $g:Y_\mu \to P$  such that $h=g\circ q_\mu$. By assumption there exists a $\lambda \in \Lambda$, $\lambda\geq \phi(\mu)$ such that an H-map $f_{\mu \lambda}$ factors through a polyhedron $P'$ with $\dim P'\leq n$. Consider H-maps $u':X_\lambda\rightarrow P'$ and $v':P'\rightarrow Y_\mu$ such that $v'\circ u'=f_{\mu \lambda}$. Let us denote $u=u'\circ p_\lambda:X \to P'$ and $v=g\circ v':P' \to P$. Now it is readily seen that $h\circ f=v\circ u$, i.e., the composition $h\circ f$  factors through a polyhedron $P'$.

Conversely, assume that for every H-map $h:Y\rightarrow P$ the composition $h\circ f$ factors through a polyhedron $P'$ with $\dim P'\leq n$, i.e., $h\circ f=v\circ u$. 

Let $\mu\in M$ and consider the H-map $q_\mu\circ f:X\to Y_\mu$. By assumption there exist a polyhedron $P'$ with $\dim P'\leq n$ and H-maps $u:X\to P'$, $v:P'\to Y_\mu$ such that 
\begin{equation}\label{eq_4}
v\circ u = q_\mu \circ f.
\end{equation}
For the H-map $u:X\to P'$ there exist a $\lambda \in \Lambda$, $\lambda\geq \phi(\mu)$ and an H-map $g:X_\lambda\to P'$ such that
\begin{equation}\label{eq_5}
g\circ p_\lambda = u.
\end{equation}

By \ref{eq_4} and \ref{eq_5} it is not difficult to verify the equality
\begin{equation*}
(f_\mu\circ p_{\phi(\mu)\lambda})\circ p_\lambda= (v\circ g)\circ p_\lambda.
\end{equation*}
Hence, there exists a $\lambda'\in \Lambda$, $\lambda'\geq \lambda$, such that 
\begin{equation*}
(f_\mu\circ p_{\phi(\mu)\lambda})\circ p_{\lambda\lambda'}=(v\circ g)\circ p_{\lambda\lambda'},
\end{equation*}
and therefore $f_{\mu\lambda'}=v\circ u$, where $u=g\circ p_{\lambda\lambda'}$. Thus, $\sd f\leq n$.
\end{proof}

\begin{remark}\label{rem.3.7}
The condition from Theorem \ref{th_2} is analogous to the condition (D2) from \cite{Mard-Segal}, Ch. II, \S 1.1, Theorem 2.
\end{remark}

\begin{proposition}
Let $X$ be a topological space, $A\subseteq X$ a weak retract, $i:A\hookrightarrow X$ the inclusion map, and $r:X\rightarrow A$ the weak retraction. Then the following conditions are equivalent:

(i) $\sd A\leq n$, 

(ii) $\sd i \leq n$, 

(iii) $\sd r \leq n$.
\end{proposition}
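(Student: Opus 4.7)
The plan is to chain together results already established in Section 2, using the defining relation $r\circ i\simeq 1_A$ of a weak retraction to interpret $i$ as a shape morphism with left inverse and $r$ as a shape morphism with right inverse.

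First I would establish the two implications from (i). Assuming $\sd A\leq n$, I apply Theorem \ref{thm.2.16} directly: since $i:A\hookrightarrow X$ is a continuous map whose domain has shape dimension $\leq n$, we get $\sd i\leq n$, giving (ii); and since $r:X\to A$ is a continuous map whose codomain has shape dimension $\leq n$, we likewise get $\sd r\leq n$, giving (iii).

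For the reverse directions, the key observation is that $A$ being a weak retract of $X$ means $r\circ i\simeq 1_A$, hence upon applying the shape functor $S:\mathrm{HTop}\to\mathrm{Sh}$ we obtain $S(r)\circ S(i)=1_A$ in the shape category. Thus $S(i)$ admits $S(r)$ as a left inverse and $S(r)$ admits $S(i)$ as a right inverse. If (ii) holds, then $\sd i\leq n$, and by Theorem \ref{prop_2} applied to $S(i)$ we conclude $\sd A\leq n$, which is (i). If (iii) holds, then $\sd r\leq n$, and by Theorem \ref{th_1} applied to $S(r)$ we conclude $\sd A\leq n$, again giving (i).

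Since the argument is entirely a matter of invoking the existing machinery, there is no serious obstacle: the only point that requires care is the translation from the topological condition ``weak retraction'' to the shape-categorical condition ``$S(i)$ has a left inverse and $S(r)$ has a right inverse'', which is immediate from functoriality of $S$. The proof is therefore essentially a one-line diagram chase once the correct earlier theorems are cited.
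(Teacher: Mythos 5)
Your proof is correct and follows essentially the same route as the paper, whose entire proof is the one-line observation that $S([r])\circ S([i])=1_{S(A)}$ followed by citations of Theorems \ref{prop_2}, \ref{th_1} and \ref{cor.2.13}. Your version is if anything slightly more explicit, spelling out the easy directions via Theorem \ref{thm.2.16} and the translation of ``weak retract'' into a one-sided inverse in the shape category.
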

\begin{proof}
Since $S([r])\circ S([i])=1_{S(A)}$, one can apply Theorems \ref{prop_2}, \ref{th_1} and  \ref{cor.2.13}.
\end{proof}

\begin{theorem}\label{prop.3.4}
Let $\mathbf{f}:\mathbf{X}\rightarrow \mathbf{Y}$ be a pro-morphism in the category HPol. If $\dim \mathbf{f}\leq n$, then for every Abelian group $G$ and an index $k>n$ the homology pro-morphism $H_k(\mathbf{f};G):H_k(\mathbf{X};G)\rightarrow H_k(\mathbf{Y};G)$ is a zero-morphism of pro-groups.
\end{theorem}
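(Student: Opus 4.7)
The plan is straightforward: unwind the hypothesis $\dim \mathbf{f}\leq n$ into a concrete factorization, apply the homology functor, and use the fact that polyhedra of dimension $\leq n$ have trivial singular homology above degree $n$.

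First I would fix a representative $(f_\mu,\phi):\mathbf{X}=(X_\lambda,p_{\lambda\lambda'},\Lambda)\rightarrow \mathbf{Y}=(Y_\mu,q_{\mu\mu'},M)$ of $\mathbf{f}$ with $\dim(f_\mu,\phi)\leq n$, which exists by Definition \ref{def.2.7}. Applying the functor $H_k(-;G)$ levelwise gives a representative $(H_k(f_\mu;G),\phi)$ of $H_k(\mathbf{f};G)$. Recall that a pro-morphism of abelian pro-groups represented by $(g_\mu,\phi)$ is the zero morphism in pro-Ab precisely when for every $\mu\in M$ there exists $\lambda\geq\phi(\mu)$ such that $g_\mu\circ \alpha_{\phi(\mu)\lambda}=0$, where $\alpha$ denotes the bonding maps of the source; i.e.\ $g_{\mu\lambda}=0$ for some $\lambda\geq\phi(\mu)$. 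So the whole task reduces to showing: for every $\mu\in M$ there is $\lambda\geq\phi(\mu)$ with $H_k(f_{\mu\lambda};G)=0$.

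This is where the hypothesis does all the work. Given $\mu\in M$, Definition \ref{def.2.1} yields $\lambda\geq\phi(\mu)$, a polyhedron $P$ with $\dim P\leq n$, and H-maps $u:X_\lambda\to P$, $v:P\to Y_\mu$ such that $f_{\mu\lambda}=v\circ u$. Functoriality of $H_k(-;G)$ then factors
\[
H_k(f_{\mu\lambda};G)=H_k(v;G)\circ H_k(u;G)
\]
through the middle group $H_k(P;G)$. Since $P=|K|$ for a simplicial complex $K$ of dimension $\leq n$, the simplicial chain complex of $K$ is zero in all degrees above $n$, hence $H_k(P;G)=0$ for $k>n$. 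Consequently $H_k(f_{\mu\lambda};G)=0$, which is what we needed.

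There is no real obstacle here; the only subtle point is the bookkeeping translating ``$\dim\mathbf{f}\leq n$'' into the criterion for a pro-morphism of groups to be null. Once one observes that the factorization of $f_{\mu\lambda}$ through a genuine (not merely homotopy-polyhedral) $n$-dimensional polyhedron $P$ is available, the vanishing of $H_k(P;G)$ above the dimension of $P$ is the classical fact that finishes the argument.
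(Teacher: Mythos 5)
Your proof is correct and follows essentially the same route as the paper's: factor $f_{\mu\lambda}$ through an $n$-dimensional polyhedron $P$, apply $H_k(-;G)$, and use $H_k(P;G)=0$ for $k>n$ to kill $H_k(f_\mu;G)\circ H_k(p_{\phi(\mu)\lambda};G)$. The only cosmetic difference is that you spell out the criterion for a pro-morphism of groups to be the zero morphism, whereas the paper cites it from Marde\v{s}i\'{c}--Segal (Ch.\ II, \S 2.3, Theorem 7).
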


\begin{proof}
If $\mathbf{f}=(f_\mu,\phi):\mathbf{X}=(X_\lambda,p_{\lambda\lambda'},\Lambda)\rightarrow \mathbf{Y}=(Y_\mu, q_{\mu\mu'},M)$, then $H_k(\mathbf{X};G)=(H_k(X_\lambda;G),H_k(p_{\lambda\lambda'};G),\Lambda)$, $H_k(\mathbf{Y};G)=(H_k(Y_\mu;G), H_k(q_{\mu\mu'};G), M)$ and $H_k(\mathbf{f};G)=(H_k(f_\mu;G),\phi):H_k(\mathbf{X};G)\rightarrow H_k(\mathbf{Y};G)$.

By hypothesis every index $\mu\in M$ admits $\lambda\in \Lambda$, $\lambda\geq \phi(\mu)$, a polyhedron $P_\mu$ with $\dim P_\mu\leq n$ and two H-maps $u_\mu:X_\lambda\rightarrow P_\mu$, $v_\mu:P_\mu\rightarrow Y_\mu$ such that $f_{\mu\lambda}=v_\mu\circ u_\mu$. This implies $H_k(f_\mu\circ p_{\phi(\mu)\lambda};G)=H_k(v_\mu;G)\circ H_k(u_\mu;G)$, i.e., $H_k(f_\mu;G)\circ H_k(p_{\phi(\mu)\lambda};G)=H_k(v_\mu;G)\circ H_k(u_\mu;G)$. Then since $\dim P_\mu\leq n$, for $k>n$ we have that $H_k(v_\mu;G)$ is a zero-morphism, so $H_k(f_\mu;G)\circ H_k(p_{\phi(\mu)\lambda};G)$ is also a zero-morphism. Then by \cite{Mard-Segal}, Ch.II, \S 2.3, Theorem 7, we conclude that $H_k(\mathbf{f}; G)$ is a zero-pro-morphsim of groups.
\end{proof}

\begin{corollary}\label{cor.3.5}
Suppose that $F:X\rightarrow Y$ is a shape morphism of toplogical spaces with $\sd F\leq n$. Then for any Abelian group $G$ and for every index  $k>n$ the \v{C}ech homology homomorphism $\check{F}_k:\check{H}_k(X;G)\rightarrow \check{H}_k(Y;G)$ is null.
\end{corollary}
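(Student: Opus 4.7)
The plan is to reduce the statement about the \v{C}ech homology homomorphism to the statement about the homology pro-morphism already proved in Theorem \ref{prop.3.4}, by passing to inverse limits.

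First, I would fix HPol-expansions $\mathbf{p}:X\to \mathbf{X}=(X_\lambda,p_{\lambda\lambda'},\Lambda)$ and $\mathbf{q}:Y\to \mathbf{Y}=(Y_\mu,q_{\mu\mu'},M)$ and, using $\sd F\leq n$ together with Definition \ref{def.2.12}, choose a pro-morphism $\mathbf{f}:\mathbf{X}\to \mathbf{Y}$ representing $F$ with $\dim \mathbf{f}\leq n$.

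Next, I would invoke the definition of \v{C}ech homology in the inverse-system framework: one has $\check{H}_k(X;G)=\lim H_k(\mathbf{X};G)$ and $\check{H}_k(Y;G)=\lim H_k(\mathbf{Y};G)$, and the homomorphism $\check{F}_k$ is, by construction, obtained by applying the inverse limit functor to the induced homology pro-morphism $H_k(\mathbf{f};G):H_k(\mathbf{X};G)\to H_k(\mathbf{Y};G)$.

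For $k>n$, Theorem \ref{prop.3.4} asserts that $H_k(\mathbf{f};G)$ is a zero-morphism in pro-Ab. Since the inverse limit functor is additive, it sends a zero pro-morphism to the zero homomorphism of limit groups. Therefore $\check{F}_k=\lim H_k(\mathbf{f};G)=0$, which is the desired conclusion. There is essentially no obstacle here beyond bookkeeping: the substance is already encoded in Theorem \ref{prop.3.4}, and the passage from pro-groups to \v{C}ech groups is the routine use of $\lim$ on the resulting null pro-morphism.
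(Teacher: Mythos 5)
Your proposal is correct and follows exactly the route the paper intends: the paper states Corollary \ref{cor.3.5} without proof as an immediate consequence of Theorem \ref{prop.3.4}, the point being precisely that $\check{H}_k(-;G)=\lim H_k(\mathbf{-};G)$ and that the inverse limit functor carries the zero pro-morphism $H_k(\mathbf{f};G)$ to the zero homomorphism. Your write-up just makes that omitted bookkeeping explicit.
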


\end{document}